\numberwithin{equation}{section}
\theoremstyle{plain}
\newtheorem{theorem}{Th\'eorème}[section]
\newtheorem{cor}[theorem]{Corollaire}
\newtheorem{prop}[theorem]{Proposition}
\newtheorem{lemma}[theorem]{Lemme}
\theoremstyle{definition}
\newtheorem{remark}[theorem]{Remarque}
\newtheorem{fact}[theorem]{Fait}
\newtheorem{definition}[theorem]{D\'efinition}
\newcommand{\acl}{\operatorname{acl}}
\newcommand{\aclq}{\operatorname{acl}^\mathrm{eq}}
\newcommand{\inv}[1]{ #1^{-1}}
\newcommand{\tp}{\operatorname{tp}}
\def\tp{\mathrm{tp}}
\def\prod{\! \cdot \!}
\def\a{\overline{a}}
\def\ab{\overline{a \prod b}}
\def\subab{\overline{a \cdot b}}
\def\ca{\overline{c \prod a }}
\def\cab{\overline{ c \prod a \prod b}}
\def\b{\overline{b}}
\def\cc{\overline{c}}
\def\d{\overline{d}}
\def\e{\overline{e}}
\def\f{\overline{f}}
\def\coker{\mathrm{coker}}
\def\Ind#1#2{#1\setbox0=\hbox{$#1x$}\kern\wd0\hbox to 0pt{\hss$#1\mid$\hss}
\lower.9\ht0\hbox to 0pt{\hss$#1\smile$\hss}\kern\wd0}
\def\Notind#1#2{#1\setbox0=\hbox{$#1x$}\kern\wd0\hbox to
0pt{\mathchardef\nn="0236\hss$#1\nn$\kern1.4\wd0\hss}\hbox to
0pt{\hss$#1\mid$\hss}\lower.9\ht0
\hbox to 0pt{\hss$#1\smile$\hss}\kern\wd0}
\def\ind{\mathop{\mathpalette\Ind{}}}
\def\nind{\mathop{\mathpalette\Notind{}}}
\def\indip{\mathop{\ \ \hbox to 0pt{\hss$\mid^{\hbox to
0pt{$\scriptstyle{P}$\hss}}$\hss}
\lower4pt\hbox to 0pt{\hss$\smile$\hss}\ \ }}
\def\nindip{\mathop{\ \ \hbox to 0pt{\hss$\!\not{\mid}^{\hbox to
0pt{$\scriptstyle\,{P}$\hss}}$\hss}
\lower4pt\hbox to 0pt{\hss$\smile$\hss}\ \ }}
\def\indild{\mathop{\ \ \hbox to 0pt{\hss$\mid^{\hbox to
0pt{$\scriptstyle\mathrm{ld}$\hss}}$\hss}
\lower4pt\hbox to 0pt{\hss$\smile$\hss}\ \ }}
\begin{document}

\title{De beaux groupes}
\date{\today}

\author{Thomas Blossier et Amador Martin-Pizarro}
\address{Universit\'e de Lyon; CNRS; Universit\'e Lyon 1; Institut Camille Jordan UMR5208, 43 boulevard du 11
novembre 1918, F--69622 Villeurbanne Cedex, France }
\email{blossier@math.univ-lyon1.fr}
\email{pizarro@math.univ-lyon1.fr}
\thanks{Recherche conduite dans le cadre du projet MODIG ANR-09-BLAN-0047}
\keywords{Model Theory, Groups, Pairs}
\subjclass{03C45}

\begin{abstract} Dans une belle paire $(M,E)$ de modèles d'une
théorie stable $T$ ayant élimination des imaginaires sans la propriété
de recouvrement fini, tout groupe définissable  se
projette, à isogénie près, sur les points $E$-rationnels d'un groupe
définissable dans le réduit à paramètres dans $E$. Le  noyau de cette
projection est un groupe définissable dans le réduit. 

Un groupe interprétable dans une paire $(K,F)$ de corps
algébriquement clos où $K$ est une extension propre de $F$ est, à
isogénie près,
l'extension des points $F$-rationnels d'un groupe algébrique sur $F$
par un groupe interprétable quotient d'un groupe
algébrique par les points $F$-rationnels d'un sous-groupe algébrique,
le tout défini sur $F$.
\end{abstract}
\maketitle

\section*{English Summary} In this short paper, we characterise
definable groups in a \emph{belle paire} $(M,E)$ of models of a
stable theory $T$ having elimination of imaginaries without the finite
cover property: every definable group is (up to isogeny) the extension
of the $E$-rational points of a group definable in the theory $T$ over
$E$ by a group definable in $T$.  

Furthermore, if $F\subsetneq K$ is a proper extension of algebraically
closed fields, every interpretable group in the pair $(K,F)$ is, up to
isogeny, the
extension of the subgroup of $F$-rational points of an algebraic
group over $F$ by an interpretable group which is the quotient of an
algebraic group by the $F$-rational points of an algebraic subgroup.

\section*{Introduction}

Deux corps algébriquement clos de même caractéristique $p$ satisfont
les mêmes propriétés élémentaires du premier ordre (cette collection
de propriétés forme alors une \emph{théorie complète}, notée
$ACF_p$). Un argument de compacité classique permet, par exemple, de
montrer que toute fonction polynomiale injective à
plusieurs variables complexes est surjective~\cite{Ax69}. La théorie
des corps algébriquement clos de caractéristique fixée, l'un
des archétypes des théories \emph{stables}, a un comportement fortement
structuré : la clôture algébrique d'un sous-corps $k$
correspond à la \emph{clôture algébrique} modèle-théorique (l'ensemble
des points à orbite fini par les automorphismes fixant $k$)~;
la dépendance algébrique correspond à la \emph{déviation}
Shelahiste~; le fait, dû à Chevalley, que la
projection d'un ensemble constructible de Zariski le reste,
correspond à l'élimination de quantificateurs dans le
langage des anneaux. De plus, l'existence d'un corps de définition
pour chaque variété équivaut à \emph{l'élimination des imaginaires},
ce qui permet de travailler avec des quotients, objets
\emph{interprétables} dans la théorie.

De nombreuses questions en géométrie algébrique se traduisent de façon
naturelle en théorie des modèles et vice-versa. Cela a instauré une
véritable interaction entre la théorie des modèles et la géométrie 
algébrique. 

On est parfois amené à considérer des objets formels à l'intérieur de
variétés algébriques. Ces objets sont fréquemment décrits à l'aide de
l'algèbre différentielle.  Dans le cas différentiel, les corps
\emph{différentiellement clos}, qui en caractéristique nulle ont
tous la même théorie, sont l'analogue des purs corps
algébriquement clos.
De même, la projection d'un ensemble différentiellement constructible
le reste. En revanche, un corps différentiellement clos possède un
sous-corps propre algébriquement clos, constitué des éléments à
dérivée nulle. Les paires d'extensions propres de corps algébriquement
clos ont été considérées pour elles-mêmes dans les travaux de 
Keisler~\cite{hjK64}, motivés par ceux de Robinson. La théorie
des paires d'extensions propres de corps algébriquement clos de
caractéristique fixée est complète~: elle correspond à la théorie des
\emph{belles paires} de modèles de $ACF_p$.
Plus généralement, les belles paires de structures stables ont été
étudiées par Poizat~\cite{Po83}. Il y axiomatise leurs théories dans le cas où
les structures n'ont pas la \emph{propriété de recouvrement fini}.

Plus récemment, Delon \cite{fD12} a obtenu une expansion naturelle du
langage des anneaux pour l'élimination des quantificateurs des belles
paires de corps algébriquement clos. Sa preuve consiste à imposer aux
sous-structures d'une belle paire $(M,P)$ d'être $P$-indépendantes~: en effet,
pour un uple $P$-indépendant $\bar a$, les formules sans
quantificateur satisfaites par $\bar a$ (qui forment le \emph{type
sans quantificateur}) entraînent toutes les propriétés élémentaires
satisfaites par $\bar a$, son type parital \cite{BPV}.

Bien que les belles paires de corps algébriquement clos soient
$\omega$-stables de rang de Morley $\omega$ \cite[Example 1.5]{Bue91},
et que la déviation y soit bien comprise \cite{Po83,BPV}, elles
n'éliminent pas les imaginaires, même \emph{géométriquement}. En
fait, une belle paire d'une théorie $T$ élimine les imaginaires
(modulo les imaginaires de $T$) si et seulement si aucun groupe infini
n'est interprétable dans $T$~\cite{PiVa04}. 

Pillay \cite{Pi07}
explicite une expansion du langage des
belles paires de corps
algébriquement clos permettant d'éliminer géométriquement les
imaginaires~: tout imaginaire est alors
interalgébrique avec un uple
d'éléments dans les (nouvelles) sortes de l'expansion.

L'étude des groupes interprétables est une question récurrente en
théorie des modèles. Pour traiter cette question, la construction de
groupes à partir d'un diagramme de configuration de groupes introduite
par Hrushovski \cite{eHPhD} s'avère fondamentale. Dans le cas des
corps différentiels, tout groupe interprétable se plonge dans un
groupe algébrique \cite{Pi97}, ce qui est en partie conséquence du fait que  la
clôture algébrique d'un uple au sens différentiel correspond à la
clôture algébrique corpique du corps différentiel engendré. Dans
une belle paire, la clôture algébrique au sens de $T$ d'une structure
$P$-indépendante est algébriquement close au sens parital \cite[Lemma
2.5]{PiVa04}. Cette propriété nous sera fort utile pour décrire les
groupes
définissables dans toute belle paire de structures
stables ainsi que les groupes interprétables dans une belle paire de
corps algébriquement clos (cf. corollaire~\ref{C:intACF} et
proposition~\ref{P:Gdef}). On se servira également des
techniques utilisées dans \cite{BPW13} pour la
classification des groupes définissables  dans les mauvais corps
\cite{BHPW06}.

\section{Préliminaires}

Les résultats cités dans ces préliminaires se trouvent dans
\cite{Po83,BPV,PiVa04}. 
Nous fixons pour toute la suite une théorie complète $T$ stable, que
l'on suppose avec élimination des quantificateurs et des imaginaires,
afin de simplifier la rédaction.  

Une \emph{belle paire} de $T$ est
la donnée de $E \preceq M$, modèles de $T$ tel que $E$ est
$|T|^+$-saturé et $M$ réalise tout type sur $A\cup E$,
où $A \subset M$ est de cardinal strictement inférieur à $|T|^+$.
Puisque deux belles paires sont
toujours élémentairement équivalentes dans le langage de $T$ muni d'un
nouveau prédicat $P$ pour le sous-modèle, on notera $T_P$ leur
théorie. L'indice $P$ fera systématiquement référence à $T_P$.
Par exemple, si $(M,E)$ est une belle paire, on notera $\acl(A)$
les éléments réels de $M$ algébriques sur $A$ au sens de $T$ et
$\acl_P(A)$ les éléments
réels de $M$ algébriques sur $A$ au sens de la paire.
De même, on utilisera les symboles $\ind$ et $\indip$ pour
les indépendances au sens de $T$ et de $T_P$, respectivement.

Une partie $A$ de $M$ est dite $P$\emph{-indépendante} si
$$ A \ind_{E_A} E,$$
où  $E_A = E \cap A =P(A)$. Deux sous-uples $P$-indépendants
satisfaisant le même $T_P$-type sans
quantificateurs  sont élémentairement
équivalents.

Un modèle $|T|^+$-saturé de $T_P$ n'est pas
forcément une belle paire. C'est le cas si et seulement si la théorie
$T$ n'a pas la propriété de recouvrement fini. Cette propriété
correspond à l'existence de relations d'équivalence  uniformément
définissables avec un nombre fini arbitrairement grand de classes.

On suppose à partir de maintenant que la théorie $T$ n'a pas la
propriété de recouvrement fini et on se place dans un modèle $(M,E)$
suffisamment saturé de $T_P$.

En particulier, la description des types pour des parties
$P$-indépendantes ci-dessus entraîne que $T_P$ a même spectre de
stabilité que $T$ et donc 
$T_P$ est stable. Par ailleurs, puisque toute partie de $E$ est
$P$-indépendante, la paire n'induit pas de structure supplémentaire
sur $E$. 

À partir de \cite[Remark 7.2 et Proposition 7.3]{BPV} (\emph{cf.}\ également
\cite[Fact 
2.2 et Lemma 2.6]{PiVa04}), on obtient les
propriétés 
suivantes~:
\begin{fact}\label{F:caractIndependance} Soit $A$ et $B$ deux parties
de $M$.

\begin{itemize}
\item La clôture algébrique $\acl_P(A)$ est toujours $P$-indépendante.
\item Si $A$ est $P$-indépendante, alors $A \indip_{E_A} E$. De
plus, on a $\acl_P(A)= \acl(A)$ et $E_{\acl_P(A)} =
\acl(E_A)=\acl_P(E_A)$.

\item Si $A$ et $B$ sont algébriquement closes au sens de la paire,
alors $$\begin{array}{rcl}A\indip\limits_{A\cap B} B & \mbox{si et
seulement si} &
\left\{\begin{array}{l}
A\ind\limits_{A\cap B,E} B\\[4mm]
\hskip7mm\mbox{et}\\[1.5mm]
E_A \ind\limits_{E_{A\cap B}} E_B
\end{array}\right. 
\end{array}$$

\end{itemize}
\end{fact}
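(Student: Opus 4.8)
Le plan est d'exploiter la description des $T_P$-types au-dessus d'une partie alg\'ebriquement close au sens de la paire, que l'absence de propri\'et\'e de recouvrement fini rend disponible : un tel type est gouvern\'e par le $T$-type au-dessus de la base r\'eunie \`a $E$ et par la position, au-dessus de $E$, de la partie $E$-rationnelle de la cl\^oture alg\'ebrique consid\'er\'ee. On se sert librement de $\acl(X)\subseteq\acl_P(X)$, si bien que toute partie $\acl_P$-close est $\acl$-close.

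Pour le premier point, on pose $D=\acl_P(A)$ et $E_D=E\cap D$. Comme $E$ est un mod\`ele, $\tp(D/E)$ est stationnaire et, par \'elimination des imaginaires, sa base canonique $\cb(D/E)$ est interd\'efinissable avec un uple r\'eel appartenant \`a la fois \`a $\dcl(E)=E$ et \`a $\acl(D)=D$. Ce uple appartient donc \`a $E_D$, ce qui signifie exactement $D\ind_{E_D}E$ : la partie $\acl_P(A)$ est $P$-ind\'ependante.

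Pour le deuxi\`eme point, on \'etablit d'abord le lemme suivant : si $A$ est $P$-ind\'ependante, alors $\acl(A)\cap E=\acl(E_A)$ -- en effet tout $e\in\acl(A)\cap E$ v\'erifie $e\ind_{E_A}A$ (car $e\in E$ et $A\ind_{E_A}E$) tout en \'etant alg\'ebrique sur $A$, donc $e\in\acl(E_A)$. On traite ensuite les \'egalit\'es $\acl_P(A)=\acl(A)$ et $E_{\acl_P(A)}=\acl(E_A)$ par un unique argument de comptage de conjugu\'es : pour $c\in\acl_P(A)$ on r\'ealise le $T$-type de $c$ par une longue suite de Morley, ind\'ependante de $E$ au-dessus de $\acl(A)$ si $c\notin E$, et prise \`a l'int\'erieur de $E$ ind\'ependamment de $A$ au-dessus de $\acl(E_A)$ si $c\in E$ ; dans les deux cas la r\'eunion avec $A$ reste $P$-ind\'ependante, et le lemme montre que chaque terme partage le $T_P$-type sans quantificateur de $Ac$. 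La description des types fournit alors autant de $T_P$-conjugu\'es de $c$ au-dessus de $A$, et la finitude impos\'ee par $c\in\acl_P(A)$ force $c\in\acl(A)$, voire $c\in\acl(E_A)$ lorsque $c\in E$ ; comme $E_A\subseteq E$ est $P$-ind\'ependante, on a aussi $\acl(E_A)=\acl_P(E_A)$. La derni\`ere assertion $A\indip_{E_A}E$ s'obtient en appliquant le troisi\`eme point aux parties $\acl_P(A)=\acl(A)$ et $E$, d'intersection $\acl(E_A)$, pour lesquelles les deux conditions de droite sont trivialement satisfaites.

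Le troisi\`eme point est le c\oe ur de l'\'enonc\'e et l'obstacle principal. Pour l'implication r\'eciproque, on v\'erifie, \`a l'aide de la description des types au-dessus de la partie $\acl_P$-close $B$, que le $T_P$-type de $A$ au-dessus de $B$ d\'etermin\'e par les deux conditions de droite est d\'efinissable au-dessus de $\acl_P(A\cap B)$ ; il ne $T_P$-d\'evie donc pas au-dessus de $A\cap B$. Pour le sens direct, on r\'ecup\`ere par restriction le $T$-type au-dessus de $BE$ et la position de $E_A$ au-dessus de $E_B$, puis la sym\'etrie et la transitivit\'e de la non-d\'eviation dans $T$ donnent $A\ind_{A\cap B,E}B$ et $E_A\ind_{E_{A\cap B}}E_B$. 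La difficult\'e est de manier de front la cl\^oture $\acl_P$, sa partie $E$-rationnelle et le calcul de d\'eviation dans $T$, et surtout de garantir que l'extension candidate est born\'ee (d\'efinissable, \`a nombre fini de conjugu\'es) : c'est l\`a que l'absence de propri\'et\'e de recouvrement fini intervient de mani\`ere essentielle, via la description des types.
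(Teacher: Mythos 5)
Le papier ne d\'emontre pas ce fait : il le cite directement de \cite[Remark 7.2 et Proposition 7.3]{BPV} et \cite[Fact 2.2 et Lemma 2.6]{PiVa04}. Votre esquisse reconstruit l'architecture standard de ces r\'ef\'erences (base canonique pour le premier point, comptage de conjugu\'es via la description des types pour le deuxi\`eme), ce qui est la bonne strat\'egie, mais elle comporte des lacunes r\'eelles aux endroits pivots. D'abord, pour le premier point, vous justifiez l'appartenance de $\cb(D/E)$ \`a $\acl(D)=D$ par l'\'elimination des imaginaires~; or celle-ci ne donne que la r\'ealit\'e de la base canonique, et son appartenance \`a $D$ est pr\'ecis\'ement le contenu de l'\'enonc\'e (elle \'equivaut \`a $D\ind_{E_D}E$) et est fausse pour une partie $D$ quelconque non close au sens de la paire (penser au code d'une courbe d\'efinie sur $E$ dont $D$ est un unique point g\'en\'erique). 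L'argument correct passe par les automorphismes de la paire : un automorphisme de $(M,E)$ fixant $D$ point par point stabilise $E$, donc fixe $\tp(D/E)$ et sa base canonique, laquelle appartient ainsi \`a $\dcl_P(D)\subseteq\acl_P(D)=D$, puis \`a $E_D$ puisque c'est un uple r\'eel de $\dcl(E)=E$. Tel qu'\'ecrit, ce pas est circulaire.

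Ensuite, dans le comptage du deuxi\`eme point, pour contredire $c\in\acl_P(A)$ il ne suffit pas que les termes $c_i$ de la suite soient $T_P$-conjugu\'es entre eux au-dessus de $A$ : il faut qu'ils r\'ealisent $\tp_P(c/A)$, donc que $Ac$ soit lui-m\^eme $P$-ind\'ependant avec la m\^eme trace sur $E$. Pour $c\notin E$ cela exige $c\ind_A E$, qui n'est pas gratuit et s'obtient en traitant d'abord le cas $c\in E$ (d'o\`u $E_{\acl_P(A)}=\acl(E_A)\subseteq\acl(A)$), puis en appliquant le premier point \`a $\acl_P(A)$~; votre phrase sur la $P$-ind\'ependance de la r\'eunion ne couvre que les $c_i$. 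Enfin, le sens direct du troisi\`eme point n'est pas d\'emontr\'e : la non-d\'eviation au sens de $T_P$ n'entra\^ine pas formellement, par simple restriction et transitivit\'e, la non-d\'eviation au sens de $T$ sur la base \'elargie $A\cap B,E$. La preuve usuelle v\'erifie que la relation d\'efinie par le membre de droite satisfait les axiomes caract\'erisant l'ind\'ependance de d\'eviation dans une th\'eorie stable (caract\`ere local, extension, bornitude du nombre d'extensions non d\'eviantes) et conclut par unicit\'e d'une telle relation~; c'est l\`a que r\'eside l'essentiel du travail, que vous signalez comme l'obstacle principal sans l'accomplir.
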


La caractérisation de l'indépendance ci-dessus permet d'obtenir
l'analogue à la propriété $(\dag)$ dans \cite{BPW09}.

\begin{lemma}\label{L:Eamalgam}
\'Etant données deux parties $A$ et $B$ algébriquement closes au sens
de la paire et indépendantes au-dessus de leur intersection, alors 

$$\acl_P(A,B) = \acl(A,B) \text{ et } E_{\acl_P(A,B)} =
\acl(E_A,E_B).$$ 
\end{lemma}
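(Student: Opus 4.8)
The plan is to reduce both equalities to one forking statement in $T$, namely that $\acl(A,B)$ is $P$-independent with $E$-part exactly $\acl(E_A,E_B)$. Write $D=A\cap B$ and $E_0=\acl(E_A,E_B)$. Since $A$ and $B$ are algebraically closes au sens de la paire, so is $D$, and the second item of Fact~\ref{F:caractIndependance} gives that $A$, $B$ and $D$ are $P$-independent, that $E_A$ and $E_B$ are algebraically closed, and that $E_D=E_A\cap E_B$; in particular $A\ind_{E_A}E$, $B\ind_{E_B}E$, and $E_0\subseteq\acl(E)=E$. Unpacking the hypothesis $A\indip_D B$ through the third item of Fact~\ref{F:caractIndependance} yields $A\ind_{D,E}B$ (the piece $E_A\ind_{E_D}E_B$ will not be needed).

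The heart of the argument is to prove $A\cup B\ind_{E_0}E$. From $A\ind_{E_A}E$ and $E_A\subseteq E_0\subseteq E$ one gets $A\ind_{E_0}E$ by transitivity. For $B$ the point is that $A$ and $B$ share $D$, so one cannot hope for $B\ind_E A$; instead, since $D\subseteq B$, transitivity turns $B\ind_{E_B}E$ into $B\ind_{D,E_B}E$. Combining this last independence with $A\ind_{D,E}B$ (and $E_B\subseteq E$) through transitivity gives $B\ind_{D,E_B}A\cup E$, and hence $B\ind_{A,E_B}E$. As $E_A\subseteq A$ and $E_B\subseteq E_0$, the bases $A,E_B$ and $A,E_0$ are interalgebraic, so $B\ind_{A,E_0}E$; together with $A\ind_{E_0}E$ this yields $A\cup B\ind_{E_0}E$.

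It remains to read off the two equalities. Since $E_0\subseteq\acl(A,B)$, the independence just obtained upgrades to $\acl(A,B)\ind_{E_0}E$; as $E_0$ is algebraically closed and $\acl(E_0,E)=E$, the standard fact that independent algebraically closed sets meet in their base gives $\acl(A,B)\cap E=E_0$. Thus $\acl(A,B)$ is $P$-independent and $E_{\acl(A,B)}=E_0$. Applying the second item of Fact~\ref{F:caractIndependance} to the $P$-independent set $\acl(A,B)$ gives $\acl_P\bigl(\acl(A,B)\bigr)=\acl\bigl(\acl(A,B)\bigr)=\acl(A,B)$; since the left-hand side equals $\acl_P(A,B)$, we obtain $\acl_P(A,B)=\acl(A,B)$ and therefore $E_{\acl_P(A,B)}=E\cap\acl(A,B)=\acl(E_A,E_B)$.

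I expect the forking chain in the second paragraph to be the only real obstacle: the naive reductions lead to $A\ind_E B$, which is false precisely because $A$ and $B$ may share elements of $D$ lying outside $E$, and the correct route is the detour through the base $D,E_B$, licensed by $D\subseteq B$.
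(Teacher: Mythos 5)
Your proof is correct and follows essentially the same route as the paper's: extract $A\ind_{A\cap B,E}B$ from the characterisation of $\indip$, use transitivity together with $A\ind_{E_A}E$ and $B\ind_{E_B}E$ to show $A\cup B$ is $P$-independent over $\acl(E_A,E_B)$, then invoke Fait~\ref{F:caractIndependance}. The only (immaterial) differences are that you swap the roles of $A$ and $B$ in the forking chain and close by applying the Fait to $\acl(A,B)$ via the intersection argument, where the paper applies it directly to $A\cup B$.
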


\begin{proof}
Par la caractérisation précédente, 
$$A\ind\limits_{A\cap B,E} B,E.$$ Comme $A\ind_{E_A} E$, il
suit par transitivité que $$A\ind\limits_{A\cap B,E_A} B,E\text{ ,
puis }   
A\ind\limits_{E_A,B} E.$$ 
Enfin, puisque $B\ind\limits_{E_B} E$, on a  $B\ind\limits_{E_B, E_A}
E$ et donc $A,B\ind\limits_{E_A,E_B} E$.

\noindent La partie $A \cup B$ est donc $P$-indépendante. Le fait
précédent permet de conclure que $\acl_P(A,B) = \acl(A,B)$ et
$E_{\acl_P(A,B)} =\acl(E_A,E_B)$.
\end{proof}

\section{Groupes définissables dans la belle paire}
Nous disposons de tous les ingrédients pour décrire les groupes
définissables dans une belle paire de $T$. Une adaptation immédiate de
\cite[proposition 1.8]{BPW13} donne un critère suffisant pour
vérifier quand un sous-groupe $T_P$-définissable d'un groupe
$T$-définissable l'est aussi.

\begin{lemma}\label{L:SgrDefInt}
Soit un translaté d'un sous-groupe $H$ connexe $T_P$-type-définissable d'un
groupe
$T$-type-définissable $G$, le tout défini sur un ensemble
algébriquement clos $A$ au sens de la paire. Soit $a$ réalisant le générique
sur $A$ de ce translaté. Si $E_{\acl_P(a,A)} = E_A$, alors  $H$ est
$T$-type-définissable.  
\end{lemma}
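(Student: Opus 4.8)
Le plan est de ramener la $T$-type-définissabilité de $H$ à une coïncidence, sur les génériques de $H$, entre la déviation au sens de $T$ et celle au sens de $T_P$, puis d'identifier $H$ au stabilisateur, calculé dans $T$, du type générique de $H$ restreint au langage de $T$~; c'est l'adaptation annoncée de \cite[proposition~1.8]{BPW13}. Dans toute la suite, $g_0$ désigne un générique de $H$ sur $A$ au sens de $T_P$ et $p=\tp(g_0/A)$ son type au sens de $T$, stationnaire puisque $A=\acl_P(A)$ est algébriquement clos au sens de $T$ (qui élimine les imaginaires).

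La première étape exploite l'hypothèse. Si $a_1,a_2$ réalisent le générique du translaté sur $A$ avec $a_1\indip_A a_2$, alors $a_1^{-1}a_2$ est un tel générique $g_0$~; les ensembles $\acl_P(a_i,A)$ étant algébriquement clos au sens de la paire, indépendants au-dessus de $A$ et d'$E$-partie $E_A$, le lemme~\ref{L:Eamalgam} donne $E_{\acl_P(a_1,a_2,A)}=\acl(E_A)=E_A$, d'où $E_{\acl_P(g_0,A)}=E_A$. Ainsi $\{g_0\}\cup A$ est $P$-indépendant et $\acl_P(g_0,A)=\acl(g_0,A)$ (fait~\ref{F:caractIndependance})~; comme $\acl_P(g_0,A)\ind_{E_A}E$ avec $E_A\subseteq A\subseteq\acl(g_0,A)$, la transitivité de la déviation fournit l'égalité clef $\acl_P(g_0,A)\ind_A E$.

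La deuxième étape est la coïncidence des indépendances. Soit $g_0,g_0'$ deux génériques de $H$ sur $A$ indépendants au sens de $T_P$, et posons $A_0=\acl_P(g_0,A)$, $A_0'=\acl_P(g_0',A)$, d'intersection $A$ et d'$E$-partie $E_A$. La caractérisation de $\indip$ du fait~\ref{F:caractIndependance} donne $A_0\ind_{A,E}A_0'$~; combinée à $A_0\ind_A E$, la transitivité de la déviation entraîne $A_0\ind_A A_0'$, soit $g_0\ind_A g_0'$. Les indépendances $\indip_A$ et $\ind_A$ coïncident donc sur les génériques indépendants de $H$. Par ailleurs, $\acl_P(g_0,A)\ind_A E$ montre que $\tp(g_0/A\cup E)$ est l'unique extension non déviante de $p$ à $A\cup E$.

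Pour conclure, la multiplication de $G$ étant $T$-définissable, la deuxième étape montre que $p$ se comporte comme le type générique d'un groupe au sens de $T$~: le produit de deux réalisations $T$-indépendantes de $p$ réalise encore $p$ et reste $T$-indépendant de chacune. Le théorème du morceau de groupe de Weil--Hrushovski, appliqué dans $T$, fournit alors un sous-groupe connexe $T$-type-définissable $\hat H\subseteq G$, de type générique $p$, qui n'est autre que $\St(p)$. Tout générique de $H$ réalise $p$, donc appartient à $\hat H$, d'où $H\subseteq\hat H$ puisque $H$ est engendré par ses génériques. Réciproquement, tout générique de $\hat H$ au sens de $T_P$ est $P$-indépendant sur $A$, d'$E$-partie $E_A$ et de $T$-type $p$ (description des types des parties $P$-indépendantes, \cite{Po83,BPV,PiVa04})~; par l'unicité de l'extension non déviante de $p$ à $A\cup E$ et cette même description, il réalise le type générique de $H$. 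Les groupes $H\subseteq\hat H$ ont donc mêmes génériques au sens de $T_P$, d'où $H=\hat H$, qui est $T$-type-définissable. L'obstacle principal est la coïncidence des indépendances de la deuxième étape, qui repose de façon essentielle sur l'égalité $\acl_P(g_0,A)\ind_A E$ tirée de l'hypothèse, et qui resservira pour identifier les génériques de $\hat H$ à ceux de $H$.
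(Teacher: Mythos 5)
La premi\`ere moiti\'e de votre argument est correcte et suit pour l'essentiel la preuve de l'article : la r\'eduction au g\'en\'erique de $H$ lui-m\^eme via le lemme~\ref{L:Eamalgam}, l'ind\'ependance clef $\acl_P(g_0,A)\ind_A E$ tir\'ee de l'hypoth\`ese, le transfert de $\indip_A$ \`a $\ind_A$ pour les g\'en\'eriques de $H$, puis la construction du groupe $\hat H=\St(p)$ (l\`a o\`u l'article invoque l'enveloppe $T$-d\'efinissable de $H$ et \cite[proposition~1.8]{BPW13}, ce qui revient au m\^eme groupe) et l'inclusion $H\subseteq\hat H$.

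Le probl\`eme est l'inclusion r\'eciproque. Vous affirmez que \og tout g\'en\'erique de $\hat H$ au sens de $T_P$ est $P$-ind\'ependant sur $A$, d'$E$-partie $E_A$ \fg{} en renvoyant \`a la description des types des parties $P$-ind\'ependantes~; or cette description ne dit rien de tel : elle ne sert qu'\`a comparer deux uples dont on sait \emph{d\'ej\`a} qu'ils sont $P$-ind\'ependants. Ce qui est facile, c'est que tout g\'en\'erique parital $h$ de $\hat H$ sur $A$ a pour $T$-type $p$ (une $T$-formule $T_P$-g\'en\'erique est $T$-g\'en\'erique et $\hat H$ est $T$-connexe). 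Ce qui ne l'est pas, c'est $h\ind_A E$ : rien n'exclut a priori que $\tp(h/A,E)$ d\'evie sur $A$, c'est-\`a-dire que $\hat H$ poss\`ede un g\'en\'erique parital \og utilisant \fg{} des param\`etres de $E$. C'est pr\'ecis\'ement le c\oe ur de la preuve de l'article : si $h\nind_A E$, on choisit $\varphi(x,e)\in\tp(h/A,E)$ non $T$-g\'en\'erique dans $\hat H$, uniform\'ement en le param\`etre par \cite[Lemme 5.5]{Po87}~; la $T_P$-formule $\psi(x)=\exists y\,(P(y)\wedge\varphi(x,y))$, \`a param\`etres dans $A$, est r\'ealis\'ee par $h$ donc $T_P$-g\'en\'erique dans $\hat H$, si bien qu'un nombre fini de ses translat\'es recouvre $\hat H$~; l'un d'eux, disons $\alpha\cdot\psi$ avec $\alpha\indip_A a$, contient le g\'en\'erique $a$ de $H$, d'o\`u $a\in\alpha\cdot\varphi(x,e')$ pour un $e'\in E$, tandis que l'hypoth\`ese $E_{\acl_P(a,A)}=E_A$ donne $a\ind_A E,\alpha$ et force $\alpha\cdot\varphi(x,e')$ \`a \^etre $T$-g\'en\'erique : contradiction. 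Sans un argument de ce type --- le seul endroit o\`u l'hypoth\`ese sert \`a contr\^oler les g\'en\'eriques de $\hat H$ et non plus seulement ceux de $H$ --- votre preuve reste incompl\`ete en son point essentiel.
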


\begin{proof}
Notons tout d'abord que l'on peut supposer
que $a$ est le générique de $H$ sur $A$~: en effet,
en prenant une deuxième réalisation $a'$ du générique sur $A$ du translaté de
$H$ telle que 
$a' \indip_A a$, on conclut que $a'\cdot \inv{a}$ (ou $\inv{a} \cdot a'$)
réalise le générique de 
$H$. Comme $a$ et $a'$ ont le même type sur $A$ au sens de la paire, 
alors $E_{\acl_P(a',A)}=E_A$ aussi. Par le lemme~\ref{L:Eamalgam}, on a 
$E_{\acl_P (a'\cdot \inv{a},A)}\subset \acl(E_{\acl_P(a,A)}, 
E_{\acl_P(a',A)} )=E_A$. 

Posons $p_0$ le $T$-réduit de $p$ et notons $H_0$ l'enveloppe
$T$-définissable de $H$, c'est-à-dire,
le plus petit sous-groupe
$T$-type définissable de $G$ contenant $H$. 
Par \cite[proposition 1.8]{BPW13}, l'enveloppe $H_0$ de
$H$ est $T$-connexe et égale au
$T$-stabilisateur de $p_0$, qui est son unique type générique. 

Il suffit maintenant de montrer que $p$ est l'unique générique de
$H_0$ au sens parital (au sens de $T_P$). D'abord, notons que tout
générique parital de
$H_0$ est une complétion de $p_0$, car toute $T$-formule générique
dans $H_0$ au sens de $T_P$ l'est également au sens de $T$ et donc est
contenue dans $p_0$.

Vérifions ensuite que, si $h$ est un générique parital de
$H_0$ sur $A$, alors  $h,A\ind_{A} E$. Si $h\nind_{A} E$,  il
existe une formule $\varphi(x,e) \in \tp(h/A,E)$, à paramètres sur
$A$, qui n'est pas générique dans $H_0$. Par \cite[Lemme
5.5]{Po87}, on peut supposer que pour chaque uple $b$, la formule
$\varphi(x,b)$ n'est pas générique dans $H_0$. Par contre, la
$T_P$-formule $\psi(x) = \exists y P(y) \wedge
\varphi(x,y)$ à paramètres dans $A$ est réalisée par $h$ et est donc
générique dans $H_0$.
Un nombre fini de translatés de la formule $\psi(x)$ doit recouvrir
le groupe $H_0$ et, en prenant une extension non-déviante de $p$, on
peut supposer que  l'uple $a$ est contenu dans $\alpha\cdot
\psi(x)$ pour un certain $\alpha \in H_0$ avec
$a\indip_A \alpha$. Il existe donc $e' \in E$ tel que $a \in \alpha\cdot
\varphi(x,e')$.  Par  la
caractérisation de l'indépendance, on a
$a\ind_{A,E} \alpha$.
De plus, comme $\acl_P(a,A)$ est $P$-indépendant par le fait
\ref{F:caractIndependance} et $E_{\acl_P(a,A)} = E_A$ par hypothèse,
il suit que $a,A \ind_{E_A} E$ et, par transitivité, $a\ind_{A}
E,\alpha$.
Ceci contredit que la formule $\alpha\cdot \varphi(x,e')$ n'est pas
générique dans $H_0$.

Par transitivité, on a $h,A\ind_{E_A} E$, donc $h,A$ est
$P$-indépendant et $E_{\acl_P(h,A)} =
E_{\acl(h,A)} = \acl(E_A) = E_A$.
Ainsi, $h,A$ est  $P$-indépendant (comme $a,A$) et a même
$T_P$-type sans quantificateurs que $a,A$, donc même $T_P$-type $p$.
Le type $p$ est par conséquent l'unique générique de
$H_0$ au sens parital. On conclut alors que $H=H_0$ est
$T$-type-définissable. 
\end{proof}

\begin{remark}\label{R:cond_nec}
Si $G$ est un groupe infini $T$-type-définissable sur $E$, alors le
sous-groupe $H=G(E)$ des points $E$-rationnels de $G$, qui est
clairement définissable dans la paire mais pas dans le réduit,
ne satisfait pas les conditions du lemme.
\end{remark}

Nous allons caractériser les groupes définissables dans la théorie de la 
paire, à isogénie près. Pour deux groupes type-définissables $G$ et $H$, 
une {\em isogénie} de $G$ dans $H$ est un sous-groupe type-définissable 
$S$ de $G\times H$ tel que :
\begin{itemize}
\item les projections $G_S$ sur $G$ et $H_S$ sur $H$ sont chacune d'indice borné ; et
\item le {\em noyau} $\ker(S)=\{g\in G:(g,1)\in S\}$ et le {\em co-noyau} $\coker(S)=\{h\in H:(1,h)\in S\}$ sont finis.\end{itemize}
Une isogénie induit donc un isomorphisme entre $G_S/\ker(S)$ et $H_S/\coker(S)$.

\begin{remark}\label{R:connexe}
La relation d'isogénie est une relation d'équivalence. Tout groupe 
type-définissable est isogène à sa composante connexe. En particulier, 
une isogénie de la composante connexe de $G$ dans $H$ définit
également une isogénie de $G$ dans $H$ et, réciproquement, une isogénie
entre $G$ et $H$ induit une isogénie entre leurs composantes connexes.
\end{remark}

Le résultat suivant, qui combine \cite[lemme
1.2]{BPW13} (une généralisation de \cite{Zi06} au cas non abélien) et 
\cite[lemme
1.5]{BPW13}, sera utilisé à plusieurs reprises.

\begin{lemma}\label{L:Stab_iso}
 Soient $G_1$ et $G_2$ deux groupes type-définissables (ou même
type-interprétables) dans une théorie stable. S'il existe, sur un
ensemble
de paramètres $C=\aclq(C)$, des éléments  $a_1, b_1$  de $G_1$ et $
a_2,b_2,$ de
$G_2$ tels que 
\begin{enumerate}
 \item\label{I:interalg} $a_1$ et $a_2$, $b_1$ et $b_2$
ainsi que $a_1\cdot b_1$ et $a_2\cdot b_2$ sont $C$-interalgébriques,
\item $a_1$, $b_1$ et $a_1\cdot b_1$
sont deux à deux indépendants sur $C$,
\end{enumerate}
alors  l'élément $a_1$ (resp. $a_2$) est générique dans un
unique translaté d'un sous-groupe connexe $H_1$ de $G_1$ (resp.
$H_2$ de $G_2$), le tout type-définissable sur $C$ et il existe une
isogénie entre $H_1$ et $H_2$, donnée par le stabilisateur de
$\tp(a_1,a_2/C)$

Si dans la condition $(\ref{I:interalg})$, on a uniquement 
$a_2$  algébrique sur $C,a_1$, respectivement
$b_2$  algébrique sur $C,b_1$ et $a_2\cdot b_2$ algébrique
sur $C,a_1\cdot b_1$, alors on obtient, dans la conclusion, une
projection type-interprétable de $H_1$ sur un quotient de $H_2$ par un
sous-groupe fini.
\end{lemma}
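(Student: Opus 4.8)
Le plan est de transporter la configuration dans le groupe produit $G_1\times G_2$. Posons $u=(a_1,a_2)$, $v=(b_1,b_2)$ et $w=(a_1\cdot b_1,\, a_2\cdot b_2)=u\cdot v$. Comme des \'el\'ements $C$-interalg\'ebriques engendrent la m\^eme cl\^oture alg\'ebrique au-dessus de $C=\aclq(C)$, ils ont les m\^emes relations de d\'eviation~: la condition $(\ref{I:interalg})$ et l'ind\'ependance deux \`a deux de $a_1$, $b_1$ et $a_1\cdot b_1$ entra\^inent que $a_2$, $b_2$ et $a_2\cdot b_2$ sont eux aussi deux \`a deux ind\'ependants sur $C$, de m\^eme que $u$, $v$ et $w$ dans $G_1\times G_2$.

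D'abord, j'appliquerais le lemme de stabilisateur \cite[lemme 1.2]{BPW13} s\'epar\'ement dans $G_1$ (aux \'el\'ements $a_1$, $b_1$, $a_1\cdot b_1$) et dans $G_2$ (aux \'el\'ements $a_2$, $b_2$, $a_2\cdot b_2$) pour obtenir des sous-groupes connexes $H_1\leq G_1$ et $H_2\leq G_2$, type-d\'efinissables sur $C$, tels que $a_1$ (resp. $a_2$) soit g\'en\'erique dans un \emph{unique} translat\'e de $H_1$ (resp. $H_2$)~; cette unicit\'e donne la premi\`ere partie de la conclusion. En appliquant le m\^eme lemme aux \'el\'ements $u$, $v$, $w$ dans $G_1\times G_2$, on obtient que le stabilisateur connexe $S=\St(\tp(a_1,a_2/C))$ est type-d\'efinissable sur $C$ et que $u$ en r\'ealise le g\'en\'erique d'un translat\'e. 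Puisque $\pi_1(S)$ stabilise $\tp(a_1/C)$ et $\pi_2(S)$ stabilise $\tp(a_2/C)$, on a $S\leq H_1\times H_2$.

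Il resterait \`a v\'erifier que $S$ r\'ealise bien une isog\'enie. La projection $\pi_1\colon S\to H_1$ est surjective \`a indice born\'e pr\`es, car $a_1=\pi_1(u)$ est g\'en\'erique dans un translat\'e de $H_1$ alors que $u$ l'est dans un translat\'e de $S$~; sym\'etriquement pour $\pi_2$. Pour les noyaux, on se sert de l'interalg\'ebricit\'e~: comme $a_2\in\acl(C,a_1)$, la fibre g\'en\'erique de $\pi_1$ est finie, donc $\coker(S)=S\cap(\{1\}\times G_2)$ est fini~; de m\^eme $a_1\in\acl(C,a_2)$ force $\ker(S)=S\cap(G_1\times\{1\})$ fini. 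Ainsi $S$ est une isog\'enie entre $H_1$ et $H_2$, comme annonc\'e. Pour la seconde partie, on reprend la m\^eme construction, mais seule l'hypoth\`ese $a_2\in\acl(C,a_1)$ (et ses analogues pour $b$ et $a\cdot b$) subsiste~: le stabilisateur $S$ et les groupes $H_1$, $H_2$ s'obtiennent comme avant et la finitude de $\coker(S)$ reste valable, tandis que $\ker(S)$ peut \^etre infini. La projection $\pi_2\colon S\to H_2$ se factorise alors \`a travers $S/\coker(S)$, isog\`ene \`a $H_1$, en une application type-interpr\'etable de $H_1$ sur $H_2/\pi_2(\coker(S))$, c'est-\`a-dire un quotient de $H_2$ par un sous-groupe fini.

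Le point d\'elicat sera le passage soign\'e entre translat\'es et sous-groupes connexes lors des trois applications du lemme de stabilisateur, ainsi que le contr\^ole de l'indice born\'e des projections de $S$~; la finitude des noyaux, elle, d\'ecoule directement de l'interalg\'ebricit\'e et ne devrait pas poser de difficult\'e.
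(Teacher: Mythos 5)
Le texte ne d\'emontre pas ce lemme mais renvoie aux lemmes 1.2 et 1.5 de \cite{BPW13}~; votre reconstruction --- lemme du stabilisateur non ab\'elien appliqu\'e dans $G_1$, dans $G_2$ puis dans $G_1\times G_2$, suivi du contr\^ole des projections et des noyaux du stabilisateur $S$ de $\tp(a_1,a_2/C)$ au moyen de l'interalg\'ebricit\'e --- est pr\'ecis\'ement le contenu de ces lemmes et est correcte. Les points que vous signalez comme d\'elicats (coh\'erence des stabilisateurs \`a gauche ou \`a droite, et le fait que $\pi_i(S)$, connexe et d'indice born\'e dans le groupe connexe $H_i$, lui est en r\'ealit\'e \'egal, ce qui donne la surjectivit\'e des projections) sont standard et vous les avez correctement identifi\'es.
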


Dans les preuves qui vont suivre, nous serons amenés à considérer des
groupes type-interprétables d'arité infinie. De
tels groupes seront dits $*$-interprétables, cas particulier de
groupes hyperdéfinissables (voir par exemple \cite[Section 4.3]{Wa00}
pour les groupes hyperdéfinissables).

\begin{remark}\label{R:Stab_iso_etoile_def}
Le lemme ci-dessus se généralise au cas où les groupes $G_1$ et
$G_2$ sont $*$-interprétables. De plus, si $G_1$ est type-définissable
et $G_2$ est $*$-interprétable, alors, par stabilité, on obtient une
isogénie (resp. une projection de même noyau), vers un groupe
connexe $H'_2$ type-interprétable, dont son générique est
interalgébrique avec celui de $H_2$ au-dessus de $C$.
\end{remark}

\begin{prop}\label{P:Gdef}
Soit $T_P$ la théorie des belles paires d'une théorie $T$ stable
avec élimination des imaginaires sans la propriété de recouvrement
fini. Tout groupe $T_P$-type-définissable est isogène à un sous-groupe
d'un groupe $T$-type-définissable. De plus, tout groupe
$T_P$-type-définissable est, à isogénie près, l'extension des points
$E$-rationnels d'un groupe $T$-type-définissable sur $E$ par un groupe
$T$-type-définissable. 
\end{prop}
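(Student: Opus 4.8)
Le plan consiste à traiter séparément la partie « réduit » et la partie « rationnelle » du groupe, chacune par une configuration de groupe reconstruite dans $T$, les deux reliées à $G$ par le lemme~\ref{L:Stab_iso}. Par la remarque~\ref{R:connexe}, je supposerais $G$ connexe, $T_P$-type-définissable sur $C=\acl_P(C)$, et je fixerais deux génériques indépendants $a\indip_C b$ de $G$, en posant $c=a\cdot b$. Comme dans tout groupe stable, $a$, $b$, $c$ sont alors deux à deux $T_P$-indépendants sur $C$, donc aussi deux à deux $T$-indépendants. Pour la première assertion, je me placerais au-dessus de $C,E$: par le fait~\ref{F:caractIndependance}, $a\indip_C b$ entraîne $a\ind_{C,E} b$, et de même $a\ind_{C,E} c$ et $b\ind_{C,E} c$, tandis que $c\in\dcl_P(C,a,b)\subset\acl(C,E,a,b)$. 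Les éléments $a$, $b$, $c$ forment ainsi une configuration de groupe dans $T$ au-dessus de $C,E$, dont la reconstruction de Hrushovski~\cite{eHPhD} fournit un groupe connexe $G^*$, $T$-type-définissable sur $\acl(C,E)$ (via l'\'elimination des imaginaires de $T$), dont $a$ est générique et dont la multiplication coïncide génériquement avec celle de $G$. Au-dessus d'une base contenant $E$, les indépendances au sens de $T$ et de $T_P$ coïncident (fait~\ref{F:caractIndependance}), de sorte que le lemme~\ref{L:Stab_iso} appliqué à $G_1=G$, $G_2=G^*$ avec $a_1=a_2=a$, $b_1=b_2=b$ (et la remarque~\ref{R:Stab_iso_etoile_def} pour passer du $*$-interprétable au type-définissable) produit une isogénie entre $G$ et $G^*$.

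Pour la structure d'extension, je changerais de base et travaillerais au-dessus de $C$ seul. À un générique $g$ de $G$ sur $C$ j'associerais sa \emph{partie rationnelle} $e_g=E_{\acl_P(C,g)}\subset E$. Le fait~\ref{F:caractIndependance} donne $e_a\ind_{E_C} e_b$ (et de même pour les autres paires), puisque $\acl_P(C,a)\cap\acl_P(C,b)=C$. Par ailleurs $\acl_P(C,a)$ et $\acl_P(C,b)$ étant algébriquement clos au sens de la paire et indépendants sur $C$, le lemme~\ref{L:Eamalgam} donne $E_{\acl_P(C,a,b)}=\acl(e_a,e_b)$; comme $c\in\acl_P(C,a,b)$, il suit $e_c\in\acl(e_a,e_b)$ au-dessus de $E_C$. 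Les éléments $e_a$, $e_b$, $e_c$ forment donc une configuration de groupe dans la structure $E$, qui est un modèle de $T$, au-dessus de $E_C=\acl(E_C)$. J'en tirerais un groupe connexe $H$, $T$-type-définissable sur $E_C\subset E$, dont $e_a$ est générique et dont les points $E$-rationnels $H(E)$ forment un groupe $T_P$-définissable.

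Comme $e_g\in\acl_P(C,g)$ pour tout $g$, on se trouve dans la situation unilatérale de la condition~(\ref{I:interalg}) du lemme~\ref{L:Stab_iso}: avec $G_1=G$, $G_2=H$, $a_1=a$, $a_2=e_a$ (et de même pour $b$ et $c$), on obtient une projection type-interprétable $\pi$ d'un sous-groupe d'indice borné de $G$ vers $H$ à noyau fini près, dont l'image est, à isogénie près, le groupe $H(E)$ des points $E$-rationnels. Le noyau de $\pi$ est formé des génériques $g$ dont la partie rationnelle n'augmente pas, c'est-à-dire vérifiant $E_{\acl_P(C,g)}=E_C$: c'est exactement l'hypothèse du lemme~\ref{L:SgrDefInt} (avec $A=C$), de sorte que ce noyau $N$ est $T$-type-définissable. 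On obtient ainsi, à isogénie près, la suite exacte $1\to N\to G\to H(E)\to 1$ annoncée.

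Le point le plus délicat me semble être la mise en place simultanée des deux configurations avec les bonnes bases: pour le groupe ambiant il faut travailler au-dessus de $C,E$, là où $\indip$ et $\ind$ coïncident et où la reconstruction a lieu dans le réduit; pour la partie rationnelle il faut au contraire une base \emph{ne contenant pas} $E$ — sans quoi les $e_g$ deviendraient triviaux — et c'est alors le lemme~\ref{L:Eamalgam} qui garantit que $(e_a,e_b,e_c)$ forme bien une configuration. Il faudra enfin veiller à ce que les groupes reconstruits, \emph{a priori} seulement $*$-interprétables, se ramènent à des groupes type-définissables grâce à la remarque~\ref{R:Stab_iso_etoile_def}, à l'\'elimination des imaginaires et à l'absence de propriété de recouvrement fini.
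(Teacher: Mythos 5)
Your overall architecture is the paper's: a first group configuration to realise $G$, up to isogeny, inside a $T$-type-definable group, a second configuration on the $E$-parts $E_{\acl_P(C,g)}$ to produce $H(E)$, Lemma~\ref{L:Stab_iso} to relate both to $G$, and Lemma~\ref{L:SgrDefInt} for the kernel. The second and third steps are essentially those of the paper. The first step, however, has a genuine gap: you add $E$ to the base and run the configuration on the raw points $a$, $b$, $c=a\cdot b$ over $C\cup E$. Over $C\cup E$ the element $a$ is in general no longer generic in $G$ --- it may even be algebraic over $C\cup E$ (take $G=H(E)$ for $H$ a $T$-group over $E$, cf.\ remarque~\ref{R:cond_nec}). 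The group reconstructed from $\tp(a,b,c/C,E)$ is then an isogeny copy of the connected component of $\St(\tp_P(a/C,E))$, which can be a proper subgroup of $G$, indeed trivial, and Lemma~\ref{L:Stab_iso} over $\aclq_P(C,E)$ only yields that this subgroup $H_1\leq G$ is isogenous to a subgroup of a $T$-group --- not $G$ itself. The paper avoids this by never enlarging the base: it works over a small submodel and takes as configuration points the closures $\a=\acl_P(a)$, $\b$, $\ab$ together with a third generic $c$ and $\cc$, $\ca$, $\cab$. These closures carry the $E$-part of $a$ with them; Lemma~\ref{L:Eamalgam} gives the $T$-algebraicity along the lines, \cite{BPW09} gives the $T$-independences, and the resulting $*$-interpretable $T$-group has generic interborn\'e with $\a$, hence with $a$, over a base over which $a$ is still generic in the connected group $G$, so that Lemma~\ref{L:Stab_iso} returns $G$ itself.

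Two further points need attention. In both of your configurations you only exhibit three points; Hrushovski's theorem requires the full six-point diagram, obtained with a third generic, and the $T$-independence of the non-collinear triples (and of each point from the lines avoiding it) must actually be verified --- this is where the paper invokes \cite{BPW09} and Lemma~\ref{L:Eamalgam}. Second, your identification of the kernel, namely that a generic $n$ of $N$ over $C$ satisfies $E_{\acl_P(C,n)}=E_C$, is exactly the hypothesis of Lemma~\ref{L:SgrDefInt} but does not follow immediately from $\pi(n)=1$; the paper establishes it by passing to the translate $N\cdot a$ over its canonical parameter, which is interalgebraic with $E_{\a}$, and using that $(n,1_H)$ stabilises $\tp_P(a,h)$ to get $E_{\overline{n \cdot a}}=E_{\a}$. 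Note finally that Lemma~\ref{L:SgrDefInt} presupposes that the ambient group is $T$-type-definable, so this last step can only be carried out once the first step has been repaired.
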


\begin{proof}
Par la remarque \ref{R:connexe}, il suffit de considérer
un groupe connexe $G$ type-définissable dans une belle paire
$(M,E)$ sur un ensemble de paramètres. Par la suite, on travaillera
au-dessus d'un sous-modèle contenant
ces paramètres, que l'on omettra. \'Etant donnés deux génériques
indépendants $a$ et $b$ de $G$, on dénotera par $\a$, $\b$ et $\ab$
les cl\^otures algébriques au sens parital des points $a$, $b$ et
$a \cdot b$
respectivement. Par
le lemme \ref{L:Eamalgam}, l'uple $\ab$ est $T$-algébrique sur $\a
\cup \b$, car $\a$ et $\b$ sont deux
sous-ensembles algébriquement clos indépendants.

\`A l'aide d'un troisième générique $c$ indépendant de $a$ et $b$,
si
l'on pose $\cc= \acl_P(c)$, $\ca = \acl_P( c \cdot a)$ et $\cab=
\acl_P(c \cdot a \cdot b)$, on 
obtient le diagramme suivant~:

\begin{center}

\begin{picture}(200,110)
\put(0,0){\line(1,1){100}}
\put(200,0){\line(-1,1){100}}
\put(0,0){\line(5,2){143}}
\put(200,0){\line(-5,2){143}}
\put(-17,-2){$\ab$}
\put(203,-2){$\ca$}
\put(97,104){$\a$}
\put(47,55){$\b$}
\put(150,55){$\cc$}
\put(90,47){$\cab$}
\put(-2,-2){$\bullet$}
\put(197,-2){$\bullet$}
\put(97,98){$\bullet$}
\put(98,38){$\bullet$}
\put(54,55){$\bullet$}
\put(141,55){$\bullet$}
\end{picture}
\end{center}

Par \cite[Lemme 2.1]{BPW09}, chaque couple de points sur une droite
ainsi que chaque triplet de points  non colinéaires sont $T$-indépendants. Le
théorème de configuration de groupe \cite{eHPhD} (dont la preuve
s'adapte au cas d'uples infinis)
nous donne un groupe $*$-interprétable connexe au sens de $T$,
dont le générique est $T$-interborné avec $\a$ et donc avec $a$.  Par
le lemme \ref{L:Stab_iso}, sa remarque et l'élimination des
imaginaires de $T$, on peut
supposer, à isogénie près, que le groupe $G$ est un sous-groupe
$T_P$-définissable d'un groupe $T$-type-définissable.

Par le lemme \ref{L:Eamalgam}, la partie $E_{\subab}$ est
$T$-algébrique
sur $E_{\a} \cup E_{\b}$. De plus, la caractérisation de
l'indépendance entraîne que  $E_{\a}$ et $E_{\b}$ sont
$T$-indépendants sur $E_{\a \cap \b}$, qui correspond aux $P$-points
du sous-modèle au-dessus duquel l'on travaille. De façon analogue, on
obtient un groupe $H$ connexe
$*$-interprétable au sens de $T$ défini sur $E$, tel que  $E_{\a}$ est
$T$-interalgébrique avec un $T$-générique $h$ de $H(E)$, qui est
également générique au sens de la paire.

La remarque \ref{R:Stab_iso_etoile_def} donne une projection
$\pi$ de $G$ sur les points $E$-rationnels d'un groupe
type-définissable connexe au sens de $T$, que l'on dénotera
également par $H(E)$,  dont le générique
$h$ est $T_P$-interalgébrique avec $E_{\a}$. 

La composante connexe $N$ du noyau est un groupe
$T$-type-définissable par le lemme \ref{L:SgrDefInt}~: soit $n$ un
générique de $N$ au sens de $T_P$ sur $a$. Alors $n \cdot a$ est un
générique de $N \cdot a$ sur son paramètre canonique, qui est
interalgébrique
avec $E_{\a}$. Comme $(n,1_H)$ est dans le stabilisateur de
$\tp_P(a,h)$, on a $(n \cdot a,h) \equiv^P (a,h)$ et en particulier 
$E_{\a}=\acl_P(h)= E_{\overline{n \cdot a}}$.
L'hypothèse du lemme est ainsi
vérifiée au-dessus de $E_{\a}$. 

\end{proof}

\section{Groupes interprétables dans les beaux corps}

Dans l'introduction, on avait remarqué  qu'une belle paire d'une
théorie stable $T$ n'élimine pas les imaginaires
dès qu'un groupe infini est interprétable dans $T$ \cite{PiVa04}. 
En revanche, la description des groupes définissables nous permet
maintenant de caractériser les groupes interprétables dans une belle
paire (de structures stables) ayant des imaginaires \emph{modérés},
propriété qui est vérifiée par les paires d'extensions propres de corps
algébriquement clos, ou plus généralement, les paires
propres de modèles d'une théorie fortement minimale avec
$\acl(\emptyset)$ infini  \cite{Pi07}.

Pour l'étude de certains groupes type-interprétables dans une
belle paire, nous allons d'abord, à partir d'une
configuration de groupe, trouver un diagramme pour des représentants réels
le plus indépendant possible.

\begin{lemma}\label{L:gpconfig}
Soit $G$ un groupe type-interprétable connexe dans une théorie stable donnée, 
trois génériques indépendants  $\alpha$, $\beta$  et $\gamma$ de
$G$, et un uple (réel) $a_0$. Si $\alpha$ est algébrique sur 
$a_0$, alors il existe des uples (réels) $a$, $b$,
$c$, $d$, $e$ et $f$ tels que 
\begin{itemize}
 \item $ (a,\alpha) \equiv (b, \beta) \equiv (c,\gamma)  \equiv
(d,\alpha\cdot\beta) \equiv (e,\gamma\cdot\alpha) \equiv
(f,\gamma\cdot\alpha\cdot\beta)\equiv (a_0,\alpha)$, et 
 \item $a\ind_{\alpha} b,c,d,e,f$ et de même pour chaque autre point.
\end{itemize}

En particulier,  dans le
diagramme suivant~:

\begin{center}
\begin{picture}(200,110)
\put(0,0){\line(1,1){100}}
\put(200,0){\line(-1,1){100}}
\put(0,0){\line(5,2){143}}
\put(200,0){\line(-5,2){143}}
\put(-13,-2){$d$}
\put(203,-2){$e$}
\put(97,104){$a$}
\put(47,55){$b$}
\put(150,55){$c$}
\put(98,46){$f$}
\put(-2,-2){$\bullet$}
\put(197,-2){$\bullet$}
\put(97,98){$\bullet$}
\put(98,38){$\bullet$}
\put(54,55){$\bullet$}
\put(141,55){$\bullet$}
\end{picture}
\end{center}

\noindent chaque triplet de points non colinéaires forme un ensemble
indépendant et tout point est indépendant de toute droite ne le contenant
pas.

\end{lemma}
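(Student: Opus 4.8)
The goal is to replace the (possibly imaginary) generic elements $\alpha,\beta,\gamma,\alpha\cdot\beta,\gamma\cdot\alpha,\gamma\cdot\alpha\cdot\beta$ of the group configuration by real representatives $a,b,c,d,e,f$ that are as independent as possible over the corresponding imaginary, while keeping each real uple interalgebraic with (in fact, having the same type as) its imaginary. Let me sketch the argument.

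The plan is to proceed one point at a time, building the real representatives successively. First I would use the hypothesis that $\alpha$ is algebraic over the real uple $a_0$ together with $\alpha$-conjugacy to install a real representative at each vertex. More precisely, since $(a_0,\alpha)$ has a definite type over $\emptyset$, for any other generic $\delta$ of $G$ with $\delta\equiv\alpha$ there is a real uple $a_\delta$ with $(a_\delta,\delta)\equiv(a_0,\alpha)$; applying this to $\beta$, $\gamma$, $\alpha\cdot\beta$, $\gamma\cdot\alpha$ and $\gamma\cdot\alpha\cdot\beta$ (each of which is generic in $G$ and hence has the same type as $\alpha$) produces candidate reals $b,c,d,e,f$ at the remaining five vertices, each satisfying the required type equality in the first bullet. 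The point is that in a group configuration every vertex is a generic of $G$, so each is conjugate to $\alpha$ and can be given a real avatar of the same type as $a_0$.

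The genuine content is the second bullet: arranging that each real point is independent over its own imaginary from all the others. Here I would appeal to stationarity of types over algebraically closed sets together with the independence theorem. The strategy is to choose $a$ realizing $\tp(a_0,\alpha/\dots)$ over $\alpha$ but independent, over $\alpha$, from all the remaining data $\beta,\gamma,b,c,\dots$ collected so far; since $a$ is algebraic over no information beyond $\alpha$ (as $a\equiv^{(\cdot,\alpha)} a_0$ forces $\alpha\in\acl(a)$), and since the nonforking extension of a stationary type can always be taken independent from any prescribed set, one realizes each real avatar $a,b,c,d,e,f$ as a nonforking realization over its imaginary, independent from everything already built. Carrying this out symmetrically — "de m\^eme pour chaque autre point" — gives $a\ind_\alpha b,c,d,e,f$ and the analogous statements at every vertex simultaneously.

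The main obstacle will be verifying that these local independence demands are \emph{mutually} consistent: making $a$ independent over $\alpha$ from the rest must not destroy the symmetric condition for $b$ over $\beta$, and so on. I expect to resolve this by processing the vertices in a fixed order and invoking transitivity together with the fact that each new real is chosen to fork with nothing outside its own imaginary; because $\alpha\in\acl(a)$, independence of $a$ over $\alpha$ transfers correctly past the imaginary. Once the second bullet is established, the geometric conclusion — that each triple of non-collinear points is independent and each point is independent from any line not containing it — follows directly from the original independence relations among $\alpha,\beta,\gamma$ in the configuration (namely that $\alpha,\beta,\gamma$ are independent and the three products are governed by the group law), transported to the reals through the interalgebraicity supplied by the first bullet, exactly as in the passage from \cite[Lemme 2.1]{BPW09} invoked in the proof of Proposition~\ref{P:Gdef}.
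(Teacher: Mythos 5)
Your plan follows the same route as the paper: put a real avatar of $(a_0,\alpha)$ at each vertex by conjugacy, realizing at each step a nonforking extension over the corresponding imaginary of everything built so far, and then check that the local independences are mutually compatible. But the proposal stops exactly where the actual work begins. Processing the vertices in a fixed order only gives, for each point, independence over its imaginary from the \emph{previously} constructed points (e.g. $e\ind_{\gamma\cdot\alpha}a,b,c,d$ but not yet $e\ind_{\gamma\cdot\alpha}a,b,c,d,f$, and for the first point $a$ one only gets $a\ind_\alpha\beta,\gamma$, nothing about $b,c,d,e,f$). Closing this is the entire content of the paper's proof: a chain of forking computations that exploits the algebraicity of each imaginary over the reals lying on a line through it. For instance, from $f\ind_{\gamma\cdot\alpha\cdot\beta}a,b,c,d,e$ and $\gamma\cdot\alpha\cdot\beta\in\acl(a,b,c)$ one deduces $f\ind_{a,b,c,d}e$, hence by transitivity $e\ind_{\gamma\cdot\alpha}a,b,c,d,f$; then similarly one pushes the independence down to $d$, then to $c$ (using that $\gamma\cdot\alpha$ and $\gamma$ are interalgebraic over $a$, and that $\gamma$ is algebraic over $a,e$), and finally invokes the symmetry of the roles of $a$, $b$, $c$. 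Saying ``I expect to resolve this by transitivity'' does not constitute this argument; the order in which the deductions are chained, and which algebraicities are used at each step, is precisely the nontrivial part.

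A second, related slip: you speak of the geometric conclusion being ``transported to the reals through the interalgebraicity supplied by the first bullet.'' The first bullet gives only $\alpha\in\acl(a)$, not $a\in\acl(\alpha)$ --- the remark following the lemma stresses that $a$ need \emph{not} be algebraic over $b,c$, so the reals do not form a group configuration and the independences among $\alpha,\beta,\gamma$ do not transfer for free. The transfer works only \emph{because} of the second bullet (the surplus information in each real over its imaginary is independent from everything else), and even then it requires a short argument, e.g. $f\ind_{\gamma\cdot\alpha\cdot\beta}a,b,e$ gives $f\ind_{b,e}a$ since $\gamma\cdot\alpha\cdot\beta\in\acl(b,e)$, which combined with $a\ind b,e$ yields $a\ind b,e,f$.
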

Remarquons que $a$ n'est pas forcément algébrique sur $b,c$ (et de même pour
tout autre triplet de points colinéaires).
\begin{proof}

Soient $\alpha$, $\beta$ et $\gamma$ trois génériques
indépendants de $G$ tels que $\alpha$ est algébrique sur un uple $a_0$. En 
prenant successivement des réalisations d'extensions non déviantes,  il
existe des uples $a$, $b$ et $c$
tels que 
$$(a,\alpha) \equiv  (b, \beta) \equiv (c,\gamma) \equiv (a_0,\alpha)$$
et 
$$a \ind_{\alpha} \beta,\gamma \;\;,\;\; b \ind_{\beta}
a,\gamma \; \text{ et } \; c \ind_{\gamma} a,b.$$
 
Vérifions que les uples $a,b,c$ sont indépendants~:
comme $\alpha$, $\beta$ et $\gamma$ sont trois génériques
indépendants, on obtient par transitivité
$$a \ind \beta,\gamma \;\;,\;\; b,\gamma \ind a \; \text{ et } \;
 c
\ind a,b.$$
En particulier, on conclut par transitivité que $a
\ind b, c$ et $b \ind a, c$.

 Prenons maintenant $$d \ind_{\alpha\cdot\beta} a,b,c \;\;,\;\;e
 \ind_{\gamma\cdot\alpha} a,b,c,d \; \text{ et } \; f
 \ind_{\gamma\cdot\alpha\cdot\beta} a,b,c,d,e  $$ tels que
$$(d,\alpha \cdot \beta)
\equiv (e, \gamma \cdot \alpha) \equiv (f,\gamma 
\cdot \alpha\cdot \beta)\equiv ( a_0,\alpha).$$

Montrons d'abord que chaque uple est indépendant des autres sur
l'imaginaire générique correspondant, ce qui permettra de montrer que  dans 
le diagramme précédent, tout point n'appartenant pas à une droite est 
indépendant de celle-ci.

Comme $\gamma\cdot\alpha\cdot\beta$ est algébrique sur $a,b,c$, on a
 $f\ind_{a,b,c,d} e$. Par transitivité, on obtient

$$ e\ind_{\gamma\cdot\alpha} a,b,c,d,f.$$

Puisque $\gamma\cdot\alpha$ est algébrique sur $a,c$, on a
$d\ind_{a,b,c} e$, ce qui donne $d\ind_{\alpha\cdot \beta}
a,b,c,e$. 

Par l'algébricité de $\gamma\cdot\alpha\cdot\beta$   sur $a,b,c$, on a
également  $f\ind_{a,b,c,e} d$ et donc, par transitivité, $$ d\ind_{\alpha\cdot
\beta} a,b,c,e,f.$$

Or, puisque $\alpha\cdot \beta$ est algébrique sur $a,b$,
l'indépendance 
$d\ind_{\alpha\cdot\beta} a,b,c$ donne $c\ind_{a,b} d$ et donc 
$c\ind_{\gamma} a,b,d$. 

Puisque  $\gamma\cdot \alpha$ et $\gamma$
sont interalgébriques sur $a$, de $$
e\ind_{\gamma\cdot\alpha} 
a,b,c,d$$ on obtient par transitivité 
$$c\ind_{\gamma} a,b,d,e \;,$$

puis, comme $\gamma = \gamma \cdot \alpha \cdot \alpha^{-1}$ est algébrique sur
$a,e$, on a  $f\ind_{a,b,d,e} c$, ce qui donne
$$c\ind_{\gamma} a,b,d,e,f.$$

Comme on a vérifié auparavant que les uples $a$, $b$ et $c$ sont 
indépendants,
il suit que $a$ et $b$ ont un comportement symétrique à celui de $c$ et donc
que $$a \ind_{\alpha} b,c,d,e,f \;\text{ et } b \ind_{\beta} a,c,d,e,f.$$

Notons ainsi que tout triplet d'uples non colinéaires
dans le diagramme satisfait les mêmes hypothèses d'indépendances que
le triplet $a,b,c$ au-dessus des génériques correspondants de $G$, qui
sont indépendants. En particulier, tout triplet d'uples non
colinéaires est libre. 

Enfin, vérifions que tout point n'appartenant pas à une droite est 
indépendant de celle-ci. Par symétrie, il suffit de le montrer pour
l'uple $a$ et la droite $b,e,f$~: on a $f\ind_{\gamma\cdot\alpha\cdot
\beta} a,b,e$, d'où
$f\ind_{b,e} a$. Comme $a \ind
b,e$, on conclut que 
$$a  \ind b,e,f.$$

\end{proof}

Pour un uple réel $a$ et un ensemble (éventuellement imaginaire) $A$,
un type $\tp_P(a/A)$ est presque $P$-interne s'il existe $B
\indip_A a$ tel que $a \in \acl_P(A,B,E)$. Notons que $\tp_P(a/A)$ est
presque $P$-interne si et seulement si $\tp_P(\acl_P(A,a)/A)$ l'est. 
De plus, toute extension d'un type presque $P$-interne l'est aussi, et
toute restriction non-déviante d'un type presque $P$-interne le
reste. 

\begin{lemma}\label{L:Pinterne}
Si $\tp_P(a/A)$ est presque $P$-interne sur l'ensemble réel $A$, alors
$a \in \acl(A,E)$.
\end{lemma}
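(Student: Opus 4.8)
The plan is to exploit the basic phenomenon that adjoining all of $E$ to a parameter set automatically makes it $P$-independent, so that the pair-algebraic closure collapses onto the ordinary $T$-algebraic closure. This lets me rewrite the hypothesis $a\in\acl_P(A,B,E)$ as a statement about $T$-algebraicity, and then remove the auxiliary parameters $B$ using the characterisation of $T_P$-independence in Fact~\ref{F:caractIndependance}.

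First I would replace $A$ by $\acl_P(A)$. This is harmless: since every extension of an almost $P$-interne type is almost $P$-interne, $\tp_P(a/\acl_P(A))$ is still almost $P$-interne; and since any set containing $E$ is $P$-independent, the second item of Fact~\ref{F:caractIndependance} gives $\acl_P(A,E)=\acl(A,E)$, whence $\acl_P(A)\subseteq\acl(A,E)$ and therefore $\acl(\acl_P(A),E)=\acl(A,E)$. So proving the conclusion for $\acl_P(A)$ yields it for $A$. Thus I assume $A=\acl_P(A)$ and fix a witness $B\indip_A a$ with $a\in\acl_P(A,B,E)$. The same remark, applied to $A\cup B\cup E$, gives $a\in\acl_P(A,B,E)=\acl(A,B,E)$, so that $a$ is now $T$-algebraic over $A\cup B\cup E$.

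The key step is to turn the $T_P$-independence $B\indip_A a$ into a $T$-independence over $A\cup E$. Setting $A_1=\acl_P(A,a)$ and $A_2=\acl_P(A,B)$, these are $P$-algebraically closed, $T_P$-independent over $A$, and their intersection is exactly $A=\acl_P(A)$ (anything in $A_1\cap A_2$ is $\indip_A$ itself, hence in $\acl_P(A)$). So the third item of Fact~\ref{F:caractIndependance} applies and yields in particular $A_1\ind_{A,E} A_2$ in the sense of $T$, hence $a\ind_{A,E} B$. Combining this with $a\in\acl(A,B,E)$ and the standard fact that $a\ind_C B$ together with $a\in\acl(C,B)$ forces $a\in\acl(C)$ (applied with $C=A\cup E$), I conclude $a\in\acl(A,E)$, as wanted.

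I expect the main obstacle to be the bookkeeping in this middle step: one must check that the sets fed into Fact~\ref{F:caractIndependance} are genuinely $P$-algebraically closed and independent over their intersection — which is precisely why passing to $\acl_P(A)$ first pays off — and one must keep careful track of which independence ($\ind$ versus $\indip$) and which closure ($\acl$ versus $\acl_P$) is in play on each line, since the whole argument hinges on silently switching between them through the $P$-independence of sets containing $E$.
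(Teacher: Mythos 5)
Your proof is correct and follows essentially the same route as the paper's: pass to $P$-algebraically closed sets, use that sets containing $E$ are $P$-independent to identify $\acl_P(A,B,E)$ with $\acl(A,B,E)$, convert $a\indip_A B$ into $a\ind_{A,E}B$ via the characterisation of independence in Fact~\ref{F:caractIndependance}, and conclude by transitivity of $T$-algebraicity. The paper's two-line proof merely compresses the bookkeeping (the intersection $\acl_P(A,a)\cap\acl_P(A,B)=\acl_P(A)$ and the appeal to the third item) that you spell out explicitly.
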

\begin{proof}
On
considère $B=\acl_P(B) \supset A=\acl_P(A)$ tel que $a\indip_A B$  et
$a \in \acl_P(B,E)=\acl(B,E)$. La caractérisation de
l'indépendance donne $a\ind_{A,E} B$ et donc $a \in
\acl(A,E)$.
\end{proof}

\begin{remark}\label{R:genEinterne} 
Si un générique d'un groupe $G$ type-définissable au sens de la paire
est presque $P$-interne, alors $G$ est isogène aux points
$E$-rationnels d'un groupe $T$-type-définissable défini sur $E$. 
\end{remark}
\begin{proof}
Si l'on travaille sur un sous-modèle $M_0$ contenant des représentants
pour les translatés de la composante connexe de $G$, on en déduit que
tous les génériques sont presque $P$-internes.
Soit $a$ une réalisation du générique
principal de $G$ sur $M_0$. Alors $a$ interalgébrique avec $E_{\a}$
au-dessus de $M_0$.
On obtient ainsi dans la proposition
\ref{P:Gdef}, une isogénie entre $G$ et le groupe $H(E)$
$*$-interprétable. On conclut par la remarque
\ref{R:Stab_iso_etoile_def}.
\end{proof}

\begin{definition}\label{D:modere}
Le groupe $T_P$-type-interprétable $G$  est \emph{modéré} si l'un de
ses génériques $\alpha$ est $T_P$-algébrique sur un uple réel $a$,
avec $\tp_P(a/\alpha)$ presque $P$-interne et $a\indip_{\alpha} E$.
\end{definition}
Si un groupe est modéré, tous ses génériques (à l'aide de
paramètres indépendants) satisfont la propriété ci-dessus.

Rappelons que Pillay \cite{Pi07} élimine géométriquement les
imaginaires des belles paires de corps algébriquement clos modulo une
famille explicite de sortes imaginaires. Dans son
introduction, il précise  que ses résultats peuvent s'étendre à toute 
belle paire de structures fortement minimales avec 
$\acl(\emptyset)$ infini. Les résultats préliminaires, plus 
précisément \cite[Lemmata 2.2 \& 2.4]{Pi07}, qui entraînent que tout  
groupe  type-interprétable dans une belle paire est modéré, utilisent 
uniquement l'élimination des imaginaires pour $T$ ainsi que la 
définissabilité du rang de Morley  (fini) pour $T$, ce qui est en 
particulier le cas pour une 
théorie fortement minimale avec $\acl(\emptyset)$ infini, comme par 
exemple, la
théorie des corps algébriquement clos de caractéristique fixée.

Pillay montre que, pour toute
belle paire $(M,E)$ d'une théorie $\omega$-stable $T$ qui a
élimination des imaginaires, tout imaginaire $\alpha$ est
interalgébrique avec
$\alpha'$, qui est lui définissable sur un uple réel $a$ dont le type
$\tp_P(a/\alpha')$ est (presque) $P$-interne, ce qui se généralise au 
cas où $T$ est stable. De plus, si le rang de Morley de $T$ est 
définissable, sa démonstration du Lemma 2.5 assure que $a$ peut être 
choisi indépendant de $E$ sur $\alpha'$. 

Nous ignorons si ce dernier fait 
peut se généraliser au cas où $T$ est stable, soit pour tous les 
imaginaires de la paire, soit uniquement pour les génériques des groupes 
interprétables.

Comme la trace de tout ensemble définissable sur un sous-modèle d'une 
théorie stable l'est à paramètres sur le sous-modèle, on obtient le 
résultat suivant. 

\begin{theorem}\label{T:ppal}
Soit $T_P$ la théorie des belles paires d'une théorie $T$ stable
avec élimination des imaginaires sans la propriété de recouvrement
fini. Tout groupe modéré  $T_P$-type-interprétable $G$ est, à isogénie
près, l'extension des points $E$-rationnels d'un groupe
$T$-type-définissable à paramètres dans $E$  par un groupe 
$T_P$-type-interprétable $N$ quotient d'un groupe 
$T$-type-définissable $V$ par un sous-groupe distingué constitué des 
points $E$-rationnels d'un groupe $T$-type-définissable $N'$ à 
paramètres dans $E$~:

$$\xymatrix @R=.3pc{ 0 \ar[r] &  N \ar[r] &  G \ar[r] &  H(E) \ar[r]
& 0 \\
&& \text{ avec }&&\\
 0 \ar[r] & N'(E) \ar[r]  & V \ar[r] &\ar[r] N  & 0 \,.}$$

\end{theorem}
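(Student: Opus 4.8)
Le plan est de ramener la description de $G$ \`a celle d'un groupe type-d\'efinissable via une configuration de groupe r\'eelle. On se ram\`enera d'abord, gr\^ace \`a la remarque~\ref{R:connexe}, au cas o\`u $G$ est connexe, en travaillant au-dessus d'un sous-mod\`ele $M_0$ contenant les param\`etres n\'ecessaires, que l'on omettra. La mod\'eration de $G$ fournit un g\'en\'erique $\alpha$ et un uple r\'eel $a$ avec $\alpha \in \acl_P(a)$, $\tp_P(a/\alpha)$ presque $P$-interne et $a \indip_\alpha E$. En prenant deux autres g\'en\'eriques ind\'ependants $\beta,\gamma$, on appliquera le lemme~\ref{L:gpconfig} avec $a_0 = a$ pour obtenir des uples r\'eels $a,b,c,d,e,f$ formant, au sens de $T_P$, une configuration de groupe au-dessus des g\'en\'eriques correspondants de $G$.

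On appliquera ensuite le th\'eor\`eme de configuration de groupe (adapt\'e aux uples infinis) \`a cette configuration r\'eelle pour produire un groupe connexe $\tilde G$ type-d\'efinissable au sens de $T_P$, de g\'en\'erique principal $T_P$-interborn\'e avec $a$. Puisque seule l'alg\'ebricit\'e de $\alpha$ sur $a$ intervient, la seconde partie du lemme~\ref{L:Stab_iso} et la remarque~\ref{R:Stab_iso_etoile_def} donneront, \`a isog\'enie pr\`es, une projection type-interpr\'etable $\pi \colon \tilde G \to G$. On notera $K$ la composante connexe de son noyau. La fibre g\'en\'erique de $\pi$ au-dessus de $\alpha$ ayant pour type $\tp_P(a/\alpha)$, presque $P$-interne, une translation montrera que le g\'en\'erique de $K$ est presque $P$-interne~; la remarque~\ref{R:genEinterne} identifiera alors $K$, \`a isog\'enie pr\`es, aux points $E$-rationnels $N'(E)$ d'un groupe $T$-type-d\'efinissable $N'$ sur $E$.

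On appliquera alors la proposition~\ref{P:Gdef} au groupe $T_P$-type-d\'efinissable $\tilde G$ pour obtenir, \`a isog\'enie pr\`es, une suite $0 \to V \to \tilde G \to H(E) \to 0$ avec $V$ type-d\'efinissable au sens de $T$ et $H(E)$ les points $E$-rationnels d'un groupe $T$-type-d\'efinissable sur $E$, de g\'en\'erique $h$ interalg\'ebrique avec $E_{\acl_P(a)}$. L'\'etape cl\'e sera de montrer que la projection $q \colon \tilde G \to H(E)$ se factorise par $\pi$, autrement dit que $K \subseteq V$ \`a indice fini pr\`es. Pour cela on \'etablira l'\'egalit\'e $E_{\acl_P(a)} = E_{\acl_P(\alpha)}$~: l'inclusion de droite \`a gauche vient de $\alpha \in \acl_P(a)$, et pour l'autre, tout $\epsilon \in E_{\acl_P(a)}$ v\'erifie $\epsilon \indip_\alpha a$ (car $\epsilon \in E$ et $a \indip_\alpha E$), d'o\`u $\epsilon \in \acl_P(\alpha)$ puisque $\epsilon \in \acl_P(a) = \acl_P(a,\alpha)$. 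On en d\'eduira $h \in \acl_P(\alpha)$ et donc la factorisation cherch\'ee.

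On conclura en posant $N = V/K$~: la projection induite donnera $G/N \cong \tilde G/V = H(E)$, soit la suite $0 \to N \to G \to H(E) \to 0$, tandis que $V$ type-d\'efinissable au sens de $T$ et $K$ isog\`ene \`a $N'(E)$ fourniront la seconde suite $0 \to N'(E) \to V \to N \to 0$. La d\'efinissabilit\'e de $H$ et de $N'$ sur $E$, et le fait que l'on en r\'ecup\`ere exactement les points $E$-rationnels, r\'esulteront de ce que la trace sur $E$ d'un ensemble $T_P$-d\'efinissable est $T$-d\'efinissable \`a param\`etres dans $E$. La principale difficult\'e sera la factorisation de $q$ par $\pi$, qui repose de fa\c{c}on essentielle sur l'hypoth\`ese de mod\'eration $a \indip_\alpha E$.
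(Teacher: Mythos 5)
Votre strat\'egie de d\'epart (r\'eduction au cas connexe au-dessus d'un sous-mod\`ele, lemme~\ref{L:gpconfig} appliqu\'e \`a l'uple fourni par la mod\'eration, calcul $E_{\acl_P(a)}=\acl_P(\alpha)\cap E$) co\"{\i}ncide avec celle du texte, mais l'\'etape centrale de votre plan comporte une lacune r\'eelle. Vous appliquez le th\'eor\`eme de configuration de groupe \emph{au sens de $T_P$} aux uples r\'eels $a,b,c,d,e,f$ pour fabriquer un groupe $\tilde G$ type-d\'efinissable dans la paire se projetant sur $G$. Or le lemme~\ref{L:gpconfig} ne fournit que des conditions d'ind\'ependance~; il ne donne pas les alg\'ebricit\'es le long des droites, et la remarque qui suit son \'enonc\'e souligne pr\'ecis\'ement qu'un point d'une droite n'est pas forc\'ement alg\'ebrique sur les deux autres. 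De fait, on a $a\indip_\alpha b,d$, donc $a\in\acl_P(b,d)$ forcerait $a\in\acl_P(\alpha)$, ce qui est faux en g\'en\'eral (pensez \`a $\alpha=a+F$ dans $K^+/F^+$ pour une paire de corps alg\'ebriquement clos). Sans ces alg\'ebricit\'es, le th\'eor\`eme de configuration de groupe ne s'applique pas dans $T_P$ et $\tilde G$ n'existe pas. C'est exactement pour contourner ce point que la preuve du texte passe au r\'eduit $T$ \emph{au-dessus de $E$}~: la presque $P$-internalit\'e de $\tp_P(a/\alpha)$, combin\'ee au lemme~\ref{L:Pinterne}, montre que $\acl_P(a)$ est $T$-alg\'ebrique sur $E,\acl_P(b),\acl_P(d)$ (et de m\^eme pour les autres droites), de sorte que l'on obtient une configuration de groupe au sens de $T$ sur $E$.

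M\^eme en admettant l'existence de $\tilde G$, un second probl\`eme appara\^{\i}t~: le th\'eor\`eme de configuration de groupe appliqu\'e dans $T_P$, th\'eorie qui n'\'elimine pas les imaginaires, ne produirait qu'un groupe $*$-interpr\'etable dans la paire~; la proposition~\ref{P:Gdef}, \'enonc\'ee pour des groupes $T_P$-type-d\'efinissables et dont la preuve repose sur les cl\^otures $\acl_P$ de points r\'eels et le lemme~\ref{L:Eamalgam}, ne s'y appliquerait donc pas, et votre r\'eduction deviendrait circulaire. La d\'ecomposition du texte est d'ailleurs diff\'erente de la v\^otre~: au lieu d'un rev\^etement $\tilde G\to G$ de noyau $K$ isog\`ene \`a $N'(E)$ puis $N=V/K$, elle construit d'abord $\pi\colon G\to H(E)$ \`a partir de la configuration des traces $E_{\acl_P(a)},\dots$, pose $N=\ker(\pi)^0$, extrait du groupe $*$-interpr\'etable sur $E$ un groupe $V$ type-d\'efinissable dans $T$ sur $E$, puis obtient une surjection $\phi\colon V\to N$ par la seconde partie du lemme~\ref{L:Stab_iso} appliqu\'ee aux paires $(a_1,\alpha)$ et $(b_1,\beta)$~; le noyau de $\phi$ est enfin identifi\'e \`a $N'(E)$ via la remarque~\ref{R:genEinterne}, par un argument de translation analogue \`a celui que vous esquissez pour $K$.
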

\begin{proof}
Puisque la presque $P$-internalité est conservée par
extensions non-déviantes, si un groupe $T_P$-type-interprétable 
sur un ensemble de paramètres est modéré, il le reste  
au-dessus d'un sous-modèle contenant ces paramètres. De même pour sa 
composante connexe.  On travaillera donc comme auparavant au-dessus d'un 
sous-modèle. Par la remarque \ref{R:connexe}, on supposera $G$  
connexe  type-interprétable dans une belle paire $(M,E)$ sur un
sous-modèle.

Soient $\alpha$, $\beta$ et $\gamma$ trois génériques
$T_p$-indépendants de $G$. Par hypothèse, le générique $\alpha$ de $G$
est algébrique sur un uple réel $a_0$, qui est $T_P$-indépendant de $E$
au-dessus de $\alpha$. De plus, le type $\tp_P(a_0/\alpha)$ est presque
$P$-interne. Par le lemme \ref{L:gpconfig}, il existe des uples $a$, $b$,
$c$, $d$, $e$ et $f$ avec $$ (\alpha,a) \equiv (\beta, b) \equiv
(\gamma,c)  \equiv
(\alpha\cdot\beta,d) \equiv (\gamma\cdot\alpha,e) \equiv
(\gamma\cdot\alpha\cdot\beta,f)\equiv (\alpha,a_0),$$

tels que, si l'on pose $\bar a = \acl_P(a)$ et de même pour les uples
réels $b$, $d$, $c$, $e$, $f$ dans le diagramme suivant~:

\begin{center}
\begin{picture}(200,110)
\put(0,0){\line(1,1){100}}
\put(200,0){\line(-1,1){100}}
\put(0,0){\line(5,2){143}}
\put(200,0){\line(-5,2){143}}
\put(-13,-2){$\d$}
\put(203,-2){$\e$}
\put(97,104){$\a$}
\put(47,55){$\b$}
\put(150,55){$\cc$}
\put(97,47){$\f$}
\put(-2,-2){$\bullet$}
\put(197,-2){$\bullet$}
\put(97,98){$\bullet$}
\put(98,38){$\bullet$}
\put(54,55){$\bullet$}
\put(141,55){$\bullet$}
\end{picture}
\end{center}

\noindent alors tout triplet de points non colinéaires est $T_P$-indépendant et
tout
point n'appartenant pas à une droite est $T_P$-indépendant de celle-ci.

Comme $a
\indip_\alpha E$, on a $E_{\a} \subset \acl_P(\alpha)\subset \a$,
donc $E_{\a} = \acl_P(\alpha) \cap E$.  De plus, par le lemme
\ref{L:Eamalgam}, comme $\a \indip \b$, on a $E_{\acl_P(\a,\b)}=
\acl(E_{\a},E_{\b})$. En particulier, l'ensemble  $E_{\d}
\subset \acl(E_{\a},E_{\b})$. Ceci est aussi valable pour tout autre
couple de points dans une même droite du diagramme précédent. De
plus, le point $E_{\a}$ est indépendant de la droite
$E_{\acl(\b,\e)}$ et de même pour tout triplet de points
non colinéaires.

La $T$-configuration de groupe :

\begin{center}
\begin{picture}(200,110)
\put(0,0){\line(1,1){100}}
\put(200,0){\line(-1,1){100}}
\put(0,0){\line(5,2){143}}
\put(200,0){\line(-5,2){143}}
\put(-13,-10){$E_{\d}$}
\put(200,-10){$E_{\e}$}
\put(97,104){$E_{\a}$}
\put(40,55){$E_{\b}$}
\put(150,55){$E_{\cc}$}
\put(95,30){$E_{\f}$}
\put(-2,-2){$\bullet$}
\put(197,-2){$\bullet$}
\put(97,98){$\bullet$}
\put(98,38){$\bullet$}
\put(54,55){$\bullet$}
\put(141,55){$\bullet$}
\end{picture}
\end{center}

\vskip10pt
\noindent donne ainsi un groupe connexe $*$-interprétable $H$ au sens
du réduit $T$ sur $E$, dont un générique $h$ de $H(E)$ est
$T$-interalgébrique avec $E_{\a}$. Comme dans le cas précédent, on
peut supposer que $H$ est type-définissable et,
à isogénie près, on obtient une
surjection type-définissable $$ \pi: G \to 
H(E).$$ 

Vérifions aussi que les points $\a,\b,\d, \cc, \e, \f$ obtenus
auparavant donnent une $T$-configuration de groupe sur l'ensemble de
paramètres $E$. Tout triplet de points
non colinéaires forme un ensemble indépendant. Puisque $\beta$ est
algébrique sur $\b$ et $ \alpha \cdot \beta$ l'est
sur $\d$, on a $\alpha$ algébrique sur  $b,d$. De plus,
par \ref{L:Eamalgam}, la clôture $T$-algébrique $\acl(\b,\d)$ est
$T_P$-algébriquement close. Le type $\tp_P(\a/\acl(\b,\d))$ est aussi
presque $P$-interne, donc  l'uple $\a$ est $T$-algébrique sur
$E,\b,\d$, par le lemme \ref{L:Pinterne}.
De même pour les autres droites.

On obtient un groupe connexe $*$-interprétable $W$ au sens du réduit
$T$ sur $E$ et deux génériques indépendants $\a'$ et $\b'$ de $W$ tels
que
$\a'$ est $T$-interalgébrique avec $\a$ sur $E$, l'élément $\b'$ est
$T$-interalgébrique avec $\b$ sur $E$ et $\a' \cdot \b'$ l'est
avec
$\d$. Comme les uples $\alpha$, $\beta$ et $\alpha \cdot \beta$
sont
déjà algébriques sur des sous-uples finis de $\a'$, $\b'$ et 
$\a'\cdot\b'$, respectivement, et puisque $W$ est une limite
projective de groupes connexes type-définissables au sens du réduit
$T$ sur $E$, alors il existe un groupe connexe $V$
type-définissable au sens du réduit $T$ sur $E$ contenant deux
génériques indépendants $a_1$ et $b_1$ tels que $\alpha$ (resp.
$\beta$, $ \alpha \cdot \beta$) est algébrique sur $a_1$ (resp.
$b_1$, $a_1\cdot b_1$). Notons que $a_1$ est $T$-algébrique sur
$E,\a$, et de même pour les autres points.

Comme $\a$, $\b$ et $\d$ sont deux à deux $T_P$-indépendants
sur $E$, les uples $a_1$, $b_1$ et $a_1\cdot b_1$ le sont aussi.

Puisque $\alpha$ est algébrique sur $a$ et $a \indip_{E_{\a}} E$, on
conclut que $\alpha\indip_{E_{\a}} E$. Si $N=\ker(\pi)^0$, l'élément
$\alpha$ est générique dans $N\cdot \alpha$ sur $\acl(h)=E_{\a}$ et
donc générique dans $N\cdot \alpha$ sur $E$. 

Le lemme \ref{L:Stab_iso} appliqué aux paires $(a_1,\alpha)$ et
$(b_1,\beta)$ nous donne ainsi une surjection type-définissable $\phi$ de 
$V$ sur $N$ (à isogénie près). Il nous reste à montrer que la composante 
connexe $N_1$  de $\ker(\phi)$ est isogène aux points
$E$-rationnels d'un groupe $T$-type-définissable $N'$. Ceci suit de la 
remarque \ref{R:genEinterne}~; en effet, considérons $n_1$ un générique
de $N_1$ sur $E,a_1$. Alors, le point $(n_1,1_N)$ est dans le
stabilisateur de $\tp_P(a_1,\alpha/\aclq_P(E))$, donc $n_1 \cdot
a_1\equiv_{\alpha}^P a_1$. Puisque
$\tp_P(a_1/\alpha)$ est
presque $P$-interne, car $a_1$ est algébrique sur $E,a$, alors le type
$\tp_P(a_1/\alpha)=\tp_P(n_1\cdot a_1/\alpha)$ l'est aussi. Comme
$\alpha$ est algébrique sur $E,a_1$,  le type
$\tp_P(n_1/E,a_1)$ est également presque $P$-interne ainsi que 
$\tp_P(n_1/E)$, grâce à l'indépendance $n_1\indip_E a_1$.
\end{proof}

Comme la théorie de belles paires $(K,F)$ de corps algébriquement clos
est $\omega$-stable, tout groupe type-interprétable est interprétable.
De
plus,  Delon  montre qu'elle est
modèle-complète dans une expansion naturelle du langage des anneaux
\cite[Corollaire 15]{fD12}. En
particulier, si $ A$ est une partie de $K$ non incluse dans $F$ et 
comme la clôture algébrique $\acl_P(A)$ est $P$-indépendante, cette partie $A$
est un
sous-modèle de $(K,F)$ au sens de l'expansion et donc une sous-structure
élémentaire au sens du langage de la paire, ce qui permet de
montrer le résultat suivant~:

\begin{cor}\label{C:intACF}
Tout groupe $G$ interprétable dans une paire $(K,F)$ de corps
algébriquement clos où $K$ est une extension propre de $F$ est, à
isogénie près, l'extension
des points $F$-rationnels d'un groupe algébrique défini sur $F$ par
un groupe $N$ quotient d'un groupe algébrique $V$ par un
sous-groupe distingué $N'(F)$, constitué des points rationnels d'un
groupe algébrique défini sur $F$
$$\xymatrix @R=.3pc{ 0 \ar[r] &  N(K)
\ar[r] &  G(K) \ar[r] &  H(F) \ar[r]
& 0 \\
&& \text{ avec }&&\\
 0 \ar[r] & N'(F) \ar[r]  & V(K) \ar[r] &\ar[r] N(K)  & 0 \,,}$$
où
$H$ et $N'$ sont des groupes algébriques définis sur $F$. 
\end{cor}
Si le groupe $G$ est interprétable sur un sous-corps $k\not\subseteq
F$, alors les groupes $V$ et $N$ obtenus le sont sur $\acl_P(k)\subset
\overline{kF}$.

\end{document}